\documentclass[conference]{ieeeconf}
\IEEEoverridecommandlockouts
\overrideIEEEmargins
\usepackage{cite}
\usepackage{amsmath,amssymb,amsfonts}
\usepackage{multirow}
\usepackage{graphicx}
\usepackage{textcomp}
\usepackage{epsfig}
\usepackage[noend]{algorithmic}
\usepackage{cleveref}[2012/02/15]
\usepackage{epstopdf}
\usepackage[ruled]{algorithm}
\usepackage{enumerate}
\usepackage{float}
\usepackage{graphics,graphicx}
\usepackage{subfigure}
\usepackage{array}
\usepackage[justification=centering]{caption}
\usepackage{footnote}
\usepackage{lipsum}
\makeatletter
\usepackage{color}
\usepackage{xspace}

\usepackage{footnote}

\newcommand{\mb}[1]{\mathbf{#1}}
\newcommand{\mbb}[1]{\mathbb{#1}}
\newcommand{\mc}[1]{\mathcal{#1}}

\def\T{\operatorname{T}}

\def\N{\mathbb{N}}

\def\bx{\mathbf{x}}
\def\bz{\mathbf{z}}
\def\by{\mathbf{y}}

\def\BibTeX{{\rm B\kern-.05em{\sc i\kern-.025em b}\kern-.08em
    T\kern-.1667em\lower.7ex\hbox{E}\kern-.125emX}}

\crefformat{footnote}{#2\footnotemark[#1]#3}

\newtheorem{lemma}{Lemma}
\newtheorem{theorem}{Theorem}

\newtheorem{assumption}{Assumption}

\begin{document}
	
\title{\LARGE\bf Delay-agnostic Asynchronous Distributed Optimization}
\author{Xuyang Wu, Changxin Liu, Sindri Magn\'{u}sson, and Mikael Johansson\thanks{X. Wu, C. Liu, and M. Johansson are with the Division of Decision and Control Systems, School of EECS, KTH Royal Institute of Technology, SE-100 44 Stockholm, Sweden. Email: {\tt {\{xuyangw,changxin,mikaelj\}@kth.se}.}}
\thanks{S. Magn\'{u}sson is with the Department of Computer and System Science, Stockholm University, SE-164 07 Stockholm, Sweden. Email: {\tt sindri.magnusson@dsv.su.se}.} \thanks{This work was supported by WASP and the Swedish Research Council (Vetenskapsr\r{a}det) under grants 2019-05319 and 2020-03607. }}
\maketitle
\begin{abstract}
Existing asynchronous distributed optimization algorithms often use diminishing step-sizes that cause slow practical convergence, or fixed step-sizes that depend on an assumed upper bound of delays. Not only is such a delay bound hard to obtain in advance, but it is also large and therefore results in unnecessarily slow convergence. This paper develops asynchronous versions of two distributed algorithms, DGD and DGD-ATC, for solving consensus optimization problems over undirected networks. In contrast to alternatives, our algorithms can converge to the fixed point set of their synchronous counterparts using step-sizes that are independent of the delays. We establish convergence guarantees under both partial and total asynchrony. The practical performance of our algorithms is demonstrated by numerical experiments.
\end{abstract}


\section{Introduction}\label{sec:intro}

Distributed optimization has attracted much attention in the last decade and has found applications in diverse areas such as cooperative control, machine learning, and power systems. 
The literature on distributed optimization has primarily focused on synchronous methods that iterate in a serialized manner, proceeding to the next iteration only after the current one is completed. Synchronous methods also require all nodes to maintain a consistent view of optimization variables without any information delay, which makes the algorithms easier to analyze. Nevertheless, synchronization through a network can be challenging. Additionally, synchronized update is inefficient and unreliable since the time taken per iteration is determined by the slowest node and the optimization process is vulnerable to single-node failure. 

Asynchronous distributed methods that do not require synchronization between nodes are often better suited for practical implementation \cite{assran2020advances}. However, asynchronous methods are subject to information delays and nodes do not have a consistent view of the optimization variables, which makes them difficult to analyze. Despite the inherent challenges, there have been notable successes in studying the mathematical properties of asynchronous optimization algorithms. One area of focus has been on asynchronous consensus optimization algorithms \cite{nedic2010convergence,zhang2019asyspa,sirb2016consensus,doan2017convergence,kungurtsev2023decentralized,assran2020asynchronous,zhang2019fully,wu2017decentralized,tian2020achieving}, including asynchronous variants of well-established consensus optimization algorithms such as DGD, PG-EXTRA, and gradient-tracking-based methods. Asynchronous distributed algorithms on other optimization problems include ADGD \cite{wang2021asynchronous}, Asy-FLEXA \cite{cannelli2020asynchronous}, the asynchronous primal-dual algorithm \cite{latafat2022primal}, and the asynchronous coordinate descent method \cite{ubl2022faster,ubl2021totally}. 

The above work mainly focused on two types of step-size strategies: diminishing step-sizes~\cite{zhang2019asyspa,sirb2016consensus,doan2017convergence,kungurtsev2023decentralized,assran2020asynchronous} and fixed delay-dependent step-sizes~\cite{wu2017decentralized, tian2020achieving, cannelli2020asynchronous, latafat2022primal, zhang2019fully,ubl2022faster}. While diminishing step-sizes are effective in stochastic optimization or non-smooth optimization, they can result in slow convergence rates in deterministic smooth problems. For these types of problems, faster algorithms can often be obtained with non-diminishing step-sizes. Fixed step-sizes that depend on delay, in contrast, usually require an upper bound on the worst-case delay that is challenging to compute prior to executing the algorithm. Moreover, the use of worst-case delay can result in a conservative step-size condition and consequently, slow down the practical convergence speed. This is because the actual delays experienced in practice may be significantly smaller than the worst-case delay. For example, \cite{mishchenko2022asynchronous} implements an asynchronous SGD on a 40-core CPU, and reports a maximum and average delay of around $1200$ and $40$, respectively. Convergence of asynchronous distributed algorithms with fixed step-sizes that do not include any delay information have been considered in \cite{nedic2010convergence,ubl2021totally,wang2021asynchronous}. However, \cite{nedic2010convergence,ubl2021totally} only consider quadratic programming and \cite{wang2021asynchronous} studies only star networks.

In this paper, we study the asynchronous variants of two distributed algorithms, the decentralized gradient descent (DGD) \cite{yuan2016convergence} and the DGD using the adapt-then-combine technique (DGD-ATC) \cite{pu2020asymptotic},  for solving consensus optimization over undirected networks. Our contributions include:
\begin{enumerate}
    \item We establish the optimality gap between the fixed point of DGD-ATC with fixed step-sizes and the optimum of the consensus optimization problem. This result is absent in the literature. 
    \item We show theoretically that, under the total asynchrony assumption, the two asynchronous methods can converge to the same fixed point sets of their synchronous counterparts with \emph{fixed step-sizes that do not include delay information}.
    \item We improve the above asymptotic convergence to linear convergence by assuming bounded information delays.
\end{enumerate}
Compared to the delay-dependent fixed step-sizes, our proposed delay-free step-sizes are easy to tune and, in general, less restrictive. Although algorithms that use delay-dependent fixed step-sizes \cite{wu2017decentralized, zhang2019fully,tian2020achieving,cannelli2020asynchronous, latafat2022primal, ubl2022faster} or diminishing step-sizes \cite{zhang2019asyspa,assran2020asynchronous,sirb2016consensus,doan2017convergence,kungurtsev2023decentralized} can theoretically converge to the optimum while our algorithms suffer from unfavourable inexact convergence inherited from their synchronous counterparts, our algorithms may achieve faster practical convergence due to their less restrictive fixed step-sizes, which is demonstrated by numerical experiments.


The outline of this paper is as follows: Section II formulates the problem, revisits the synchronous algorithms DGD and DGD-ATC, and reviews/establishes their optimality error bounds. Section III introduces the asynchronous DGD and the asynchronous DGD-ATC, and Section IV provides convergence results. Finally, Section V tests the practical performance of the two asynchronous algorithms by numerical experiments and Section VI concludes the paper.

\subsection*{Notation and Preliminaries}

We use $\mb{1}_d$, $\mathbf{0}_{d\times d}$, and $I_d$ to denote the $d$-dimensional all-one vector, the $d\times d$ all-zero matrix, and the $d\times d$ identity matrix, respectively, where the subscript is omitted when it is clear from context. The notation $\otimes$ represents the Kronecker product and $\N_0$ is the set of natural numbers including $0$. For any symmetric matrix $W\in\mbb{R}^{n\times n}$, $\lambda_i(W)$, $1\le i\le n$ denotes the $i$th largest eigenvalue of $W$, $\operatorname{Range}(W)$ is its range, and $W\succ \mathbf{0}$ means that $W$ is positive definite. For any vector $x\in\mathbb{R}^n$, we use $\|x\|$ to represent the $\ell_2$ norm and define $\|x\|_W=\sqrt{x^TWx}$ for any positive definite matrix $W\in\mathbb{R}^{n\times n}$. For any differentiable function $f:\mathbb{R}^d\rightarrow\mathbb{R}$, we say it is $L$-smooth for some $L>0$ if 
\begin{equation*}
    \|\nabla f(y)-\nabla f(x)\|\le L\|y-x\|,~\forall x,y\in\mathbb{R}^d
\end{equation*}
and it is $\mu$-strongly convex for some $\mu>0$ if \begin{equation*}
    \langle \nabla f(y)-\nabla f(x), y-x\rangle\ge \mu\|y-x\|^2,~\forall x,y\in\mathbb{R}^d.
\end{equation*}

\section{Problem Formulation and Synchronous distributed Algorithms}

This section describes consensus optimization and revisits the synchronous distributed algorithms, DGD \cite{yuan2016convergence} and DGD-ATC \cite{pu2020asymptotic}, for solving it. The asynchronous version of the two methods will be introduced in Section \ref{sec:asycalg}.

\subsection{Consensus Optimization}

Consider a network of $n$ agents described by an undirected, connected graph 
$\mc{G}=(\mc{V}, \mc{E})$, where $\mc{V}=\{1,\ldots,n\}$ is the vertex set and $\mc{E}\subseteq \mc{V}\times \mc{V}$ is the edge set. In the network, each agent $i$ observes a local cost function $f_i:\mathbb{R}^d\rightarrow \mathbb{R}$ and can only interact with its neighbors in $\mc{N}_i=\{j: \{i,j\}\in\mc{E}\}$. Consensus optimization aims to find a common decision vector that minimizes the total cost of all agents:
\begin{equation}\label{eq:consensusprob}
\begin{split}
	\underset{x\in\mathbb{R}^d}{\operatorname{minimize}}~&~f(x)=\sum_{i\in\mc{V}} f_i(x).
\end{split}
\end{equation}

Distributed algorithms for solving Problem \eqref{eq:consensusprob} include the distributed subgradient method \cite{nedic2009distributed}, DGD \cite{yuan2016convergence}, distributed gradient-tracking-based algorithm \cite{nedic17}, distributed dual averaging\cite{liu2022decentralized}, and PG-EXTRA \cite{shi2015proximal}. While these algorithms were originally designed to be executed synchronously, they have since been extended to allow for asynchronous implementations. However, existing asynchronous methods often suffer from slow convergence due to the use of either diminishing step-sizes or fixed step-sizes that depend on a (usually unknown and large) upper bound on all delays. 

In this paper, we analyse the asynchronous version of   two
algorithms with delay-free fixed step-sizes: Decentralized Gradient Descent (DGD) and DGD using Adapt-Then-Combine Technique (DGD-ATC).

%
%

\subsection{Decentralized Gradient Descent (DGD)}

 The first algorithm is DGD~\cite{yuan2016convergence}. To present the algorithm compactly, define
$\mathbf{x}=(x_1^T, \ldots, x_n^T)^T\in\mathbb{R}^{nd}$, $F(\bx) = \sum_{i\in\mc{V}} f_i(x_i)$, 
and  let
$\mb{W}=W\otimes I_d$ where
$W$ is an averaging matrix\footnote{We say a matrix $W=(w_{ij})\in\mbb{R}^{n\times n}$ is an averaging matrix associated with $\mc{G}$ if it is non-negative, symmetric ($W=W^T$), stochastic ($W\mb{1}=\mb{1}$), and satisfies $w_{ij}=0$ if and only if $\{i,j\}\notin \mc{E}$ and $i\ne j$. This matrix can be easily formed in a distributed manner, with many options listed in \cite[Section 2.4]{shi2015extra}.} associated with $\mc{G}$. We use $k\in\mathbb{N}_0$ as iteration index and $\mathbf{x}^k$ as the value of $\bx$ at iteration $k$. Then the DGD algorithm progresses according to the following iterations: 
\begin{equation}\label{eq:DGD}
    \mathbf{x}^{k+1}= \mb{W}\bx^k - \alpha \nabla F(\bx^k),
\end{equation}
where $\alpha>0$ is the step-size. 

 As shown in~\cite{yuan2016convergence}, the DGD algorithm converges to a fixed point under reasonable assumptions. However, while the set of fixed points of DGD is not identical to the set of optimal solution of Problem \eqref{eq:consensusprob}, it is possible to bound the difference between the two sets under the following assumptions:
%
\begin{assumption}\label{asm:prob}
Each $f_i$ is proper closed convex, lower bounded, and $L_i$-smooth for some $L_i>0$. Further, Problem \eqref{eq:consensusprob} has a non-empty and bounded optimal solution set.
\end{assumption}
\begin{assumption}\label{asm:strongconvexity}
    Each $f_i$ is $\mu_i$-strongly convex.
\end{assumption}

 
 We are now in a position to quantify the gap between the fixed point of DGD and the optimal solution. 
%
%
%
Define
\begin{align}
    &L=\max_{i\in\mc{V}} L_i,~\bar{L}=\frac{1}{n}\sum_{i\in\mc{V}} L_i,\label{eq:LbarLdef}\\
    &\beta=\max\{|\lambda_2(W)|, |\lambda_n(W)|\},\label{eq:beta}
\end{align}
where $\beta\in(0,1)$ since $\mc{G}$ is connected \cite{nedic17}.  We first state the following lemma that follows similarly to Lemma 2 and Theorem 4 in \cite{yuan2016convergence}. 
\begin{lemma}\label{lemma:DGDoptimalitygap}
    Suppose that Assumption \ref{asm:prob} holds. If
    \begin{equation*}
        \alpha \le \min\left(\frac{1+\lambda_n(W)}{L}, \frac{1}{\bar{L}}\right),
    \end{equation*}
    then the fixed point set of DGD \eqref{eq:DGD} is non-empty, and DGD converges to a point $\bx^\star\in\mbb{R}^{nd}$ satisfying
    \begin{align}
        & \|x_i^\star - \bar{x}^\star\|\le \frac{\alpha\sqrt{C}}{1-\beta},\quad\forall i\in\mc{V},\label{eq:consensusDGD}\\
        & f(\bar{x}^\star) - f^\star \le \left(\frac{\alpha}{1-\beta}+\frac{L\alpha^2}{2(1-\beta)^2}\right)C,\label{eq:fbarDGD}
    \end{align}
    where $\bar{x}^\star=\frac{1}{n}\sum_{i\in\mc{V}} x_i^\star$, $f^\star$ is the optimal value of \eqref{eq:consensusprob}, $L$, $\bar{L}$, and $\beta$ are given in \eqref{eq:LbarLdef}--\eqref{eq:beta}, and
    \begin{equation}\label{eq:Cdef}
        C=2L(f^\star-\sum_{i\in\mc{V}} \inf_{x_i\in\mbb{R}^d} f_i(x_i)).
    \end{equation}
    The fixed point is unique if, in addition, Assumption \ref{asm:strongconvexity} holds.
\end{lemma}
\begin{proof}
    See Appendix \ref{append:gapDGD}.
\end{proof}

\subsection{DGD using Adapt-Then-Combine Technique (DGD-ATC)}

DGD-ATC \cite{pu2020asymptotic} is a variant of DGD that uses the adapt-then-combine technique and follows the update
%
\begin{equation}\label{eq:DGD-ATC}
    \mathbf{x}^{k+1}= \mb{W}(\bx^k - \alpha \nabla F(\bx^k)),
\end{equation}
where $\mb{W}$ is the same as in \eqref{eq:DGD} and $\alpha>0$ is the step-size.

We are unaware of any previous work that analyses the convergence of DGD-ATC with fixed step-sizes. In the lemma below, we show that DGD-ATC has a similar optimality gap as DGD. The convergence of DGD-ATC \eqref{eq:DGD-ATC} follows as a special case of Theorem \ref{thm:partial} in Section \ref{sec:asycalg}.
\begin{lemma}\label{lemma:DGDATCoptimalitygap}
    Suppose that Assumption \ref{asm:prob} holds. If $W\succ \mb{0}$, then the fixed point set of DGD-ATC \eqref{eq:DGD-ATC} is non-empty and for any fixed point $\bx^\star\in\mbb{R}^{nd}$, \eqref{eq:consensusDGD}--\eqref{eq:fbarDGD} hold. If, in addition, Assumption \ref{asm:strongconvexity} holds, then the fixed point is unique.
\end{lemma}
\begin{proof}
See Appendix \ref{append:DGDATCoptimalitygap}.
\end{proof}
\section{Asynchronous distributed Algorithms}\label{sec:asycalg}

In this section, we introduce the asynchronous DGD and DGD-ATC algorithms. A key advantage of these algorithms is that they do not require global synchronization between nodes or a global clock. 
Both algorithms are analyzed in a setting where each node $i\in\mc{V}$ is activated at discrete time points, and can update and share its local variables once it is activated. In addition, every node $i\in\mc{V}$ has a buffer $\mc{B}_i$ in which it can receive and store messages from neighbors all the time (even when it is inactive).

\subsection{Asynchronous DGD}\label{ssec:algDGD}

In the asynchronous DGD, we let each node $i\in\mc{V}$ hold $x_i\in\mathbb{R}^d$ and $x_{ij}\in\mathbb{R}^d$ $\forall j\in\mc{N}_i$, where $x_i$ is the current local iterate of node $i$ and $x_{ij}$ records the most recent $x_j$ it received from node $j\in\mc{N}_i$. Once activated, node $i$ reads all $x_j$ in the buffer $\mc{B}_i$ and then sets $x_{ij}=x_j$ and, in case $\mc{B}_i$ contains multiple $x_j$'s for a particular $j\in\mc{N}_i$, node $i$ sets  $x_{ij}$ as the most recently received $x_j$. Next, it updates $x_i$ by
\begin{equation}\label{eq:DGDxiupdate}
    x_i \leftarrow w_{ii}x_i+\sum_{j\in\mc{N}_i} w_{ij}x_{ij} - \alpha \nabla f_i(x_i)
\end{equation}
and broadcasts the new $x_i$ to all its neighbors. Once a node $j\in\mc{N}_i$ receives $x_i$, it stores $x_i$ in its buffer $\mc{B}_j$. A detailed implementation is given in Algorithm \ref{alg:DGD}.

To describe the asynchronous DGD mathematically, we index the iterates by $k\in\N_0$. The index $k$ is increased by $1$ whenever an update is performed on a local variable $x_i$ of some nodes $i\in\mc{V}$. The index $k$ does not need to be known by the nodes -- it is only introduced to order events in our theoretical analysis. We can now see that each $x_{ij}$ in \eqref{eq:DGDxiupdate} is a delayed $x_j$ -- each node $i\in\mc{V}$ updates using the most recently received $x_j$ for higher efficiency but it is, in general, not the newest $x_j$ computed by node $j$. Let $\mc{K}_i\subseteq \N_0$ denote the set of iterations where node $i$ updates its iterate. For convenient notation, we define $\bar{\mc{N}}_i=\mc{N}_i\cup\{i\}$ for all $i\in\mc{V}$. Then, the asynchronous DGD can be described as follows. For each $i\in\mc{V}$ and $k\in\N_0$,
\begin{equation}\label{eq:asyDGDupdateindex}
    x_i^{k+1} = \begin{cases}
        \sum_{j\in\bar{\mc{N}_i}} w_{ij}x_j^{s_{ij}^k} - \alpha \nabla f_i(x_i^k), & k\in \mc{K}_i,\\
        x_i^k, & \text{otherwise},
    \end{cases}
\end{equation}
where $s_{ij}^k\in [0, k]$ for $j\in\mc{N}_i$ is the iteration index of the most recent version of $x_j$ available to node $i$ at iteration $k$ and $s_{ii}^k=k$. If $\mc{K}_i=\N_0$ $\forall i\in\mc{V}$ and $s_{ij}^k=k$ $\forall \{i,j\}\in\mc{E}, \forall k\in\N_0$, then \eqref{eq:asyDGDupdateindex} reduces to the synchronous DGD \eqref{eq:DGD}.
\begin{algorithm}[tb]
		\caption{Asynchronous DGD}
		\label{alg:DGD}
		\begin{algorithmic}[1]
			\STATE {\bfseries Initialization:} All the nodes agree on $\alpha>0$, and cooperatively set $w_{ij}$ $\forall \{i,j\}\in\mc{E}$.
			\STATE Each node $i\in\mc{V}$ chooses $x_i\in\mathbb{R}^d$, creates a local buffer $\mc{B}_i$, and shares $x_i$ with all neighbors in $\mc{N}_i$.
			\FOR{each node $i\in \mc{V}$}
			    \STATE 
			    keep \emph{receiving $x_j$ from neighbors and store $(x_j,j)$ in $\mc{B}_i$} until node $i$ is activated\footnotemark.
			    \STATE set $x_{ij}=x_j$ $\forall (x_j,j)\in\mc{B}_i$. If multiple $(x_j,j)\in \mc{B}_i$ for some $j$, choose the most recently received one.
			    \STATE empty $\mc{B}_i$.
			    \STATE update $x_i$ according to \eqref{eq:DGDxiupdate}.
			    \STATE send $x_i$ to all neighbors $j\in\mc{N}_i$.
			\ENDFOR
			\STATE \textbf{Until} a termination criterion is met.
		\end{algorithmic}
	\end{algorithm}
    \footnotetext{\label{note1}In the first iteration, each node $i\in\mc{V}$ can be activated only after it received $(x_j,j)$ (Algorithm \ref{alg:DGD}) or $(y_j,j)$ (Algorithm \ref{alg:DGD-ATC}) from all $j\in\mc{N}_i$.}

\subsection{Asynchronous DGD-ATC}
To implement the asynchronous DGD-ATC, each node $i\in\mc{V}$ holds $x_i\in\mathbb{R}^d$, $y_i\in\mathbb{R}^d$, and $y_{ij}\in\mathbb{R}^d$ for $j\in \mc{N}_i$, where $x_i$ is the current local iterate of node $i$, $y_i=x_i - \alpha \nabla f_i(x_i)$, and $y_{ij}$, $j\in\mc{N}_i$ records the most recent value of $y_j$ it received from node $j$. Once activated, node $i\in\mc{V}$ first reads all $y_j$ in its buffer $\mc{B}_i$ and then sets $y_{ij}=y_j$ and, in case $\mc{B}_i$ contains multiple values of $y_j$ for a particular $j\in\mc{N}_i$, node $i$ sets $y_{ij}$ as the most recent $y_j$ it has received. Next, it updates $x_i$ by
\begin{equation}\label{eq:ATCxiupdate}
    x_i \leftarrow w_{ii}y_i+\sum_{j\in \mc{N}_i} w_{ij}y_{ij},
\end{equation}
computes $y_i=x_i-\alpha \nabla f_i(x_i)$, and broadcasts $y_i$ to all $j\in\mc{N}_i$. Once a node $j\in\mc{N}_i$ receives $y_i$, it stores $y_i$ in its buffer $\mc{B}_j$. A detailed implementation of the asynchronous DGD-ATC is described in Algorithm \ref{alg:DGD-ATC}.

	\begin{algorithm}[tb]
		\caption{Asynchronous DGD-ATC}
		\label{alg:DGD-ATC}
		\begin{algorithmic}[1]
			\STATE {\bfseries Initialization:} All the nodes agree on $\alpha>0$, and cooperatively set $w_{ij}$ $\forall \{i,j\}\in\mc{E}$.
			\STATE Each node $i\in\mc{V}$ chooses $x_i\in\mathbb{R}^d$, creates a local buffer $\mc{B}_i$, sets $y_i=x_i-\alpha\nabla f_i(x_i)$ and shares it with all $j\in\mc{N}_i$.
			\FOR{each node $i\in \mc{V}$}
			\STATE keep \emph{receiving $y_j$ from neighbors and store $(y_j,j)$ in $\mc{B}_i$} until node $i$ is activated\cref{note1}.
			\STATE set $y_{ij}=y_j$ for all $(y_j,j)\in\mc{B}_i$. If multiple $(y_j,j)\in \mc{B}_i$ for some $j$, choose the most recently received one.
			\STATE empty $\mc{B}_i$.
			\STATE update $x_i$ by \eqref{eq:ATCxiupdate}.
			\STATE set $y_i=x_i-\alpha\nabla f_i(x_i)$.
            \STATE share $y_i$ with all neighbors $j\in\mc{N}_i$.
            \ENDFOR
			\STATE \textbf{Until} a termination criterion is met.
		\end{algorithmic}
	\end{algorithm}

Note that each $y_{ij}$ in \eqref{eq:ATCxiupdate} is a delayed $x_j-\alpha \nabla f_j(x_j)$. Then, similar to \eqref{eq:asyDGDupdateindex}, the asynchronous DGD-ATC can be described as follows. For each $i\in\mc{V}$ and $k\in\N_0$,
\begin{equation}\label{eq:ATCupdateindex}
    x_i^{k+1} \!\!=\! \begin{cases}
        \sum_{j\in\bar{\mc{N}_i}} w_{ij}(x_j^{s_{ij}^k}\!-\!\alpha\nabla f_j(x_j^{s_{ij}^k})), & k\in \mc{K}_i,\\
        x_i^k, &\! \text{otherwise},
    \end{cases}
\end{equation}
where all the notations follow their definitions in Section \ref{ssec:algDGD}. When $\mc{K}_i=\N_0$ $\forall i\in\mc{V}$ and $s_{ij}^k=k$ $\forall \{i,j\}\in\mc{E}, \forall k\in\N_0$, \eqref{eq:ATCupdateindex} reduces to the synchronous DGD-ATC.

\section{Convergence Analysis}\label{sec:convana}
In this section, we analyse the convergence of the asynchronous DGD and the asynchronous DGD-ATC under two different models of asynchrony. Our first results allow for total asynchrony in the sense of Bertsekas and Tsitsiklis~\cite{bertsekas2015parallel}, \emph{i.e.} the information delays $k-s_{ij}^k$ may grow arbitrarily large but no node can cease to update and old information must eventually be purged from the system. This assumption is well-suited for scenarios where communication and computation delays are ``unstable", e.g., in massively parallel computing grids with heterogeneous computing nodes, delays can quickly add up if a node is saturated \cite{pmlr-v80-zhou18b}. More formally, we make the following assumption. 

\begin{assumption}[total asynchrony]\label{asm:totalasynchrony}
    The following holds:
	\begin{enumerate}
	    \item $\mc{K}_i$ is an infinite subset of $\N_0$ for each $i\in\mc{V}$.
	    \item $\lim_{k\rightarrow+\infty} s_{ij}^k= +\infty$ for any $i\in\mc{V}$ and $j\in\mc{N}_i$.
	\end{enumerate}
\end{assumption}
%

The following theorem provides delay-free step-size conditions that guarantee that the asynchronous DGD and DGD-ATC algorithms converge under total asynchrony.

\begin{theorem}[total asynchrony]\label{thm:total}
    Suppose that Assumptions \ref{asm:prob}--\ref{asm:totalasynchrony} hold. Also suppose that in the asynchronous DGD,
    \begin{equation}\label{eq:stepsizecond}
        \alpha\in \left(0, 2\min_{i\in\mc{V}} \frac{w_{ii}}{L_i}\right),
    \end{equation}
    and in the asynchronous DGD-ATC,
    \begin{equation}\label{eq:stepsizecondATC}
        \alpha\in\left(0,\frac{2}{\max_{i\in\mc{V}} L_i}\right).
    \end{equation}
    Then, $\{\bx^k\}$ generated by either method converges to some element in the fixed point set of the synchronous counterpart.
\end{theorem}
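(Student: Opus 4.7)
The plan is to apply the Asynchronous Convergence Theorem of Bertsekas and Tsitsiklis~\cite{bertsekas2015parallel} (Ch.~6, Prop.~2.1), which reduces the problem to constructing a sequence of nested Cartesian-product sets $X(k) = \prod_{i\in\mc{V}} X_i(k)$ such that the synchronous operator $T$ satisfies $T(X(k))\subseteq X(k+1)$ and $\bigcap_k X(k)$ is contained in the fixed-point set $X^\star$ of $T$. Under Assumption~\ref{asm:totalasynchrony}, the theorem then forces the async iterates $\bx^k$ to enter each $X(k)$ and hence to converge into $X^\star$.

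The main technical step is a blockwise max-norm non-expansiveness bound,
\begin{equation*}
\max_{i\in\mc{V}}\|(T\bx)_i - x_i^\star\| \le \max_{i\in\mc{V}}\|x_i - x_i^\star\|,
\end{equation*}
for any sync fixed point $\bx^\star$. For DGD, I would use the fixed-point identity $x_i^\star = \sum_{j\in\bar{\mc{N}}_i} w_{ij} x_j^\star - \alpha\nabla f_i(x_i^\star)$ to decompose
\begin{equation*}
(T\bx)_i - x_i^\star = w_{ii}\bigl[(x_i - x_i^\star) - \tfrac{\alpha}{w_{ii}}(\nabla f_i(x_i) - \nabla f_i(x_i^\star))\bigr] + \sum_{j\in\mc{N}_i} w_{ij}(x_j - x_j^\star).
\end{equation*}
Condition~\eqref{eq:stepsizecond} gives $\alpha/w_{ii}\le 2/L_i$, precisely the classical threshold making gradient descent on a convex $L_i$-smooth function non-expansive; combined with the triangle inequality and row-stochasticity of $W$ this delivers the bound. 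For DGD-ATC, $T(\bx) = \mb{W}(\bx - \alpha\nabla F(\bx))$: under~\eqref{eq:stepsizecondATC} each componentwise gradient step is non-expansive, and $\mb{W}$ preserves the max-norm, so the same estimate follows directly.

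The main obstacle is that mere non-expansiveness does not give strictly shrinking boxes, so the nested family has to be built using a converging sync trajectory as a scaffold. Sync convergence is already available from Lemma~\ref{lemma:DGDoptimalitygap} for DGD; for DGD-ATC, it can be established by verifying that $T$ is $\theta$-averaged under~\eqref{eq:stepsizecondATC} (with $\theta = \alpha\max_i L_i/2 < 1$) and invoking Krasnoselskii--Mann, or alternatively obtained as the zero-delay special case of Theorem~\ref{thm:partial}. Fixing a sync orbit $\hbx^k := T^k(\hbx^0)\to\bx^\infty\in X^\star$, I would anchor the boxes at $\bx^\infty$ with radii $r_k$ exceeding $\sup_{m\ge 0}\|\hbx^{k+m}-\bx^\infty\|_{\infty,\mc{V}}$, so that the entire tail of the sync orbit lies inside $X(k)$; the desired inclusion $T(X(k))\subseteq X(k+1)$ is then obtained by pairing the max-norm non-expansiveness with the Cauchy property of $\{\hbx^k\}$ and an appropriate re-indexing that absorbs the one-step lag, in the spirit of convergence arguments for averaged (rather than strictly contractive) operators. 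With the nested boxes in place, Bertsekas--Tsitsiklis concludes the proof.
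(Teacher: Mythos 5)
Your high-level strategy---recast both algorithms as an asynchronous fixed-point iteration for the synchronous operator $\T$ and verify a block max-norm estimate via the decomposition of $(\T\bx)_i-x_i^\star$ into a local gradient-step term plus a convex combination of neighbor differences---is the same as the paper's, which invokes a ready-made pseudo-contraction version of the asynchronous convergence theorem (Theorem 3.20 of \cite{Feyzmahdavian21}) instead of building the Bertsekas--Tsitsiklis nested sets by hand. However, there is a genuine gap in the way you try to close the argument: you establish only \emph{non-expansiveness} of $\T$ in $\|\cdot\|_{\infty}^b$ and then attempt to manufacture strictly shrinking boxes from the convergence of a single synchronous orbit. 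That step fails. The synchronous convergence condition requires $\T(X(k))\subseteq X(k+1)$ for \emph{every} point of $X(k)$, whereas non-expansiveness only gives the invariance $\T\bigl(B(\bx^\infty,r)\bigr)\subseteq B(\bx^\infty,r)$; it provides no mechanism for the radius to decrease, no matter how the scaffold orbit $\hbx^k$ is re-indexed or how fast it converges, since an arbitrary point of $X(k)$ need not track that orbit. (Totally asynchronous iterations of merely non-expansive maps can in fact fail to converge, so some strict contraction property is unavoidable here.) A secondary problem is that your fallback for synchronous DGD-ATC convergence via averagedness and Krasnoselskii--Mann lives in the Euclidean norm, whose balls are not Cartesian products, so it cannot feed the box condition.

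The missing ingredient is Assumption \ref{asm:strongconvexity}, which is among the theorem's hypotheses and which you never use. With $\mu_i$-strong convexity, the local map $x_i\mapsto x_i-\tfrac{\alpha}{w_{ii}}\nabla f_i(x_i)$ is a \emph{strict} contraction with factor $\sqrt{1-\tfrac{\alpha}{w_{ii}}(2-\tfrac{L_i\alpha}{w_{ii}})\mu_i}<1$ under \eqref{eq:stepsizecond} (and analogously with $w_{ii}$ replaced by $1$ for DGD-ATC under \eqref{eq:stepsizecondATC}); combined with your convex-combination decomposition and Jensen's inequality, this makes $\T$ a strict pseudo-contraction in $\|\cdot\|_{\infty}^b$ toward its unique fixed point, i.e.\ $\|\T(\bx)-\bx^\star\|_{\infty}^b\le\rho\|\bx-\bx^\star\|_{\infty}^b$ with $\rho<1$. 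The nested sets are then simply $X(k)=\{\bx:\|\bx-\bx^\star\|_{\infty}^b\le\rho^kR\}$, which are Cartesian products, strictly shrink to $\{\bx^\star\}$, and satisfy $\T(X(k))\subseteq X(k+1)$ by the definition of $\rho$---no scaffold orbit is needed, and the synchronous convergence of DGD-ATC falls out as a by-product rather than being a prerequisite. This is exactly the route the paper takes.
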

\begin{proof}
See Appendix \ref{append:thmtotal}.
\end{proof}

Under total asynchrony, 
there is no 
lower bound on the update frequency of nodes and no upper bound on the information delays, and we are only able to give asymptotic convergence guarantees.
To derive non-asymptotic convergence rate guarantees, we consider the more restrictive notion of partial asynchrony~\cite{bertsekas2015parallel}.
\begin{assumption}[partial asynchrony]\label{asm:partialasynchrony}
    There exist positive integers $B$ and $D$ such that
    \begin{enumerate}
	\item For every $i\in\mc{V}$ and for every $k\ge 0$, at least one element in the set $\{k,\ldots,k+B\}$ belongs to $\mc{K}_i$.
	  \item There holds
        \begin{equation*}
            k-D \le s_{ij}^k \le k
        \end{equation*}
        for all $i\in\mc{V}$, $j\in\mc{N}_i$, and $k\in \mc{K}_i$.
    \end{enumerate}
\end{assumption}
In Assumption \ref{asm:partialasynchrony}, $B$ and $D$ characterize the minimum update frequency and the maximal information delay, respectively. If $B=D=0$, then Assumption \ref{asm:partialasynchrony} reduces to the synchronous scheme where all local variables $x_i^k$ $\forall i\in\mc{V}$ are instantaneously updated at every iteration $k\in\N_0$.

To state our convergence result, we define the block-wise maximum norm for any $\bx=(x_1^T,\ldots, x_n^T)^T\in\mathbb{R}^{nd}$ as
\begin{equation*}
    \|\bx\|_{\infty}^b = \max_{i\in\mc{V}} \|x_i\|.
\end{equation*}
The following theorem establishes linear convergence for the two algorithms under partial asynchrony.
\begin{theorem}[partial asynchrony]\label{thm:partial}
    Suppose that Assumptions \ref{asm:prob}, \ref{asm:strongconvexity}, \ref{asm:partialasynchrony} hold. Also suppose that \eqref{eq:stepsizecond} holds in the asynchronous DGD and \eqref{eq:stepsizecondATC} holds in the asynchronous DGD-ATC. Then, $\{\bx^k\}$ generated by either method satisfies
    \begin{equation*}
        \|\mathbf{x}^k-\mathbf{x}^\star\|_{\infty}^b\le \rho^{\lfloor k/(B+D+1)\rfloor}\|\mathbf{x}^0-\mathbf{x}^\star\|_{\infty}^b,
    \end{equation*}
    where $\bx^\star$ is the fixed point of their synchronous counterpart and $\rho\in(0,1)$. Specifically, for
    \begin{align}
        &\text{async DGD}:~\rho=\sqrt{1-\alpha\min_{i\in\mc{V}}\left(\mu_i\left(2-\frac{\alpha L_i}{w_{ii}}\right)\right)},\label{eq:rhodgd}\\
        &\text{async DGD-ATC}:~\rho=\sqrt{1-\alpha\min_{i\in\mc{V}}\left(\mu_i(2-\alpha L_i)\right)}.\label{eq:rhodgdatc}
    \end{align}
\end{theorem}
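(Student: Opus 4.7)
My plan is to reduce Theorem \ref{thm:partial} to a per-update block-max contraction of the form
\[
k\in\mc{K}_i:\ \|x_i^{k+1}-x_i^\star\|\le \rho\max_{j\in\bar{\mc{N}}_i}\|x_j^{s_{ij}^k}-x_j^\star\|,\qquad k\notin\mc{K}_i:\ x_i^{k+1}=x_i^k,
\]
with the algorithm-specific $\rho$ of \eqref{eq:rhodgd}/\eqref{eq:rhodgdatc}, and then feed this into a Bertsekas--Tsitsiklis-style epoch induction with epoch length $T=B+D+1$. The fixed point $\bx^\star$ is the unique one guaranteed by Lemmas \ref{lemma:DGDoptimalitygap}--\ref{lemma:DGDATCoptimalitygap} under Assumptions \ref{asm:prob}--\ref{asm:strongconvexity}.

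For async DGD, I would exploit the fixed-point identity $x_i^\star=w_{ii}x_i^\star+\sum_{j\in\mc{N}_i}w_{ij}x_j^\star-\alpha\nabla f_i(x_i^\star)$ together with $\sum_j w_{ij}=1$ to rewrite \eqref{eq:asyDGDupdateindex} as
\[
x_i^{k+1}-x_i^\star=w_{ii}u_i+\sum_{j\in\mc{N}_i}w_{ij}\bigl(x_j^{s_{ij}^k}-x_j^\star\bigr),\quad u_i:=\bigl(I-\tfrac{\alpha}{w_{ii}}\nabla f_i\bigr)(x_i^k)-\bigl(I-\tfrac{\alpha}{w_{ii}}\nabla f_i\bigr)(x_i^\star).
\]
Applying Jensen on $\|\cdot\|^2$ (a convex combination with weights $w_{ii},w_{ij}$ summing to one) and the standard gradient-map contraction $\|(I-\beta\nabla f_i)(x)-(I-\beta\nabla f_i)(y)\|^2\le(1-\beta\mu_i(2-\beta L_i))\|x-y\|^2$ for $\beta\le 2/L_i$ (valid with $\beta=\alpha/w_{ii}$ by \eqref{eq:stepsizecond}), then collecting $w_{ii}(1-(\alpha/w_{ii})\mu_i(2-\alpha L_i/w_{ii}))+(1-w_{ii})=1-\alpha\mu_i(2-\alpha L_i/w_{ii})$, delivers the per-update bound with $\rho$ as in \eqref{eq:rhodgd}. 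For async DGD-ATC the same argument is cleaner: the gradient is already inside the averaging, so Jensen applied directly to \eqref{eq:ATCupdateindex} reduces the task to bounding $\|(I-\alpha\nabla f_j)(x_j^{s_{ij}^k})-(I-\alpha\nabla f_j)(x_j^\star)\|^2$ term by term, and the same contraction lemma (now with $\beta=\alpha\le 2/L_j$ by \eqref{eq:stepsizecondATC}) yields $\rho$ as in \eqref{eq:rhodgdatc}.

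With the per-update contraction in hand, set $T=B+D+1$ and define $V_n:=\sup_{i\in\mc{V},\,t\ge nT}\|x_i^t-x_i^\star\|$. A short induction on $k$ using the per-update bound shows $\|x_i^{k+1}-x_i^\star\|\le\max_{j,s\le k}\|x_j^s-x_j^\star\|$, so every $V_n$ is finite and $V_0=\|\bx^0-\bx^\star\|_\infty^b$. To prove $V_{n+1}\le\rho V_n$, fix $i\in\mc{V}$ and use Assumption \ref{asm:partialasynchrony}(i) to pick $\tau_i\in\mc{K}_i\cap[nT+D,\,nT+D+B]$ (nonempty, and $\tau_i+1\le(n+1)T$). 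By Assumption \ref{asm:partialasynchrony}(ii), $s_{ij}^{\tau_i}\in[\tau_i-D,\tau_i]\subseteq[nT,(n+1)T-1]$, so every term on the right-hand side of the per-update bound is $\le V_n$, giving $\|x_i^{\tau_i+1}-x_i^\star\|\le\rho V_n$. Any later $k\in\mc{K}_i$ with $k>\tau_i$ satisfies $k-D>nT$, so the same reasoning yields $\|x_i^{k+1}-x_i^\star\|\le\rho V_n$, while non-update steps preserve the bound. Hence $\|x_i^t-x_i^\star\|\le\rho V_n$ for every $t\ge\tau_i+1$, and in particular for every $t\ge(n+1)T$; taking the supremum over $i$ gives $V_{n+1}\le\rho V_n$. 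Iterating, $V_n\le\rho^n V_0$, and the theorem follows from $\|\bx^k-\bx^\star\|_\infty^b\le V_{\lfloor k/T\rfloor}$.

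The main technical subtlety, and what dictates the choice $T=B+D+1$, is the closing of this induction: the offset $D$ inside the window $[nT+D,nT+D+B]$ where the first new-epoch update $\tau_i$ is sought is exactly what is needed so that all information pulled by $\tau_i$ (and by all subsequent updates before the next epoch) lies in the current epoch $\{t\ge nT\}$, while the width $B$ from Assumption \ref{asm:partialasynchrony}(i) is exactly what guarantees that the window contains an update. Beyond this bookkeeping, the only nontrivial ingredient is the per-update contraction; the asymmetric appearance of $w_{ii}$ in \eqref{eq:rhodgd} versus \eqref{eq:rhodgdatc} is entirely driven by whether the gradient sits outside the averaging (DGD, requiring a rescaling by $1/w_{ii}$ inside the self-map before the gradient-map contraction can be invoked) or inside it (DGD-ATC, where Jensen applies directly).
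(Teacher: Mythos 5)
Your proposal is correct, and its mathematical core coincides with the paper's: the decisive ingredient in both is the block-wise pseudo-contraction $\|\T_i(\bx)-x_i^\star\|\le\rho\,\|\bx-\bx^\star\|_\infty^b$, obtained exactly as you do it --- subtract the fixed-point identity, apply Jensen's inequality to the convex combination with weights $w_{ii},w_{ij}$, and invoke the gradient-map contraction $\|(I-\beta\nabla f_i)(x)-(I-\beta\nabla f_i)(y)\|^2\le(1-\beta\mu_i(2-\beta L_i))\|x-y\|^2$ with $\beta=\alpha/w_{ii}$ for DGD and $\beta=\alpha$ for DGD-ATC; the recombination $w_{ii}\bigl(1-\tfrac{\alpha}{w_{ii}}\mu_i(2-\tfrac{\alpha L_i}{w_{ii}})\bigr)+(1-w_{ii})=1-\alpha\mu_i(2-\tfrac{\alpha L_i}{w_{ii}})$ is literally the computation in the paper's Appendix C, Step 2. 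Where you diverge is that the paper then simply cites an external result (Theorem 3.21 of Feyzmahdavian--Johansson) to convert the pseudo-contraction plus Assumption \ref{asm:partialasynchrony} into the rate $\rho^{\lfloor k/(B+D+1)\rfloor}$, whereas you reprove that lemma from scratch via the epoch quantities $V_n=\sup_{i,\,t\ge nT}\|x_i^t-x_i^\star\|$ with $T=B+D+1$. Your induction is sound: the window $[nT+D,\,nT+D+B]$ is guaranteed to contain an update by part (i) of the assumption, the offset $D$ ensures all information read there (and at every later update) has time index $\ge nT$ by part (ii), and non-update steps copy, so $V_{n+1}\le\rho V_n$. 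A side benefit of your self-contained argument is that it never needs the normalization $0\in\mc{K}_i$ that the cited theorem assumes, and it produces the floored exponent directly --- which is precisely the small repair the paper has to remark on after invoking the reference. The only thing to keep explicit is that existence and uniqueness of $\bx^\star$ come from the arguments in Lemmas \ref{lemma:DGDoptimalitygap}--\ref{lemma:DGDATCoptimalitygap} (or, for uniqueness, from the pseudo-contraction itself), which you correctly flag.
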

\begin{proof}
See Appendix \ref{append:thmpartial}.
\end{proof}

By Lemmas \ref{lemma:DGDoptimalitygap}--\ref{lemma:DGDATCoptimalitygap} and Theorems \ref{thm:total}-\ref{thm:partial}, the two asynchronous methods can converge to an approximate optimum of Problem \eqref{eq:consensusprob}, where the optimality gap is given in Lemmas \ref{lemma:DGDoptimalitygap}--\ref{lemma:DGDATCoptimalitygap}. Note that the range of step-sizes that guarantees convergence is independent of the degree of asynchrony in the system. The two algorithms 
converge even under total asynchrony, but the guarantees we can give improve as the amount of asynchrony decreases. Moreover, Theorem \ref{thm:partial} indicates two advantages of the asynchronous DGD-ATC over the asynchronous DGD. First, it allows for a larger step-size range \eqref{eq:stepsizecondATC} than \eqref{eq:stepsizecond}, which may lead to faster practical convergence. Second, even using the same $\alpha$, the asynchronous DGD-ATC has a faster convergence rate: Let $\rho$, $\hat{\rho}$ denote the values in \eqref{eq:rhodgd} and \eqref{eq:rhodgdatc}, respectively. Then,
         \begin{equation}\label{eq:fastrate}
             \begin{split}
                 \rho^2-\hat{\rho}^2 &= \alpha \left(\min_{i\in\mc{V}}\mu_i(2-\alpha L_i)-\min_{i\in\mc{V}} \mu_i\left(2-\alpha \frac{L_i}{w_{ii}}\right)\right)\\
                 &\ge \alpha\min_{i\in\mc{V}} \left(\mu_i(2-\alpha L_i)-\mu_i\left(2-\alpha \frac{L_i}{w_{ii}}\right)\right)\\
                 &= \alpha^2\min_{i\in\mc{V}} \mu_iL_i\left(\frac{1}{w_{ii}}-1\right)\ge 0.
             \end{split}
         \end{equation}
        The faster convergence of the asynchronous DGD-ATC is also demonstrated by experiments in Section \ref{sec:exp}.
\subsection{Comparison with Related Methods}

To the best of our knowledge, Theorem \ref{thm:total} provides the first convergence result for solving \eqref{eq:consensusprob} with non-quadratic $f_i$ on general networks under total asynchrony. Other works considering total asynchrony include \cite{ubl2021totally, mishchenko2018delay}. In particular, the asynchronous coordinate descent method in \cite{ubl2021totally} can solve Problem \eqref{eq:consensusprob} with quadratic objective functions over undirected, connected networks, and the asynchronous proximal gradient method in \cite{mishchenko2018delay} can address \eqref{eq:consensusprob} with non-quadratic $f_i$'s, but only considers star networks.

In order to distinguish our results from the state-of-the-art on asynchronous consensus optimization algorithms \cite{nedic2010convergence,zhang2019asyspa,sirb2016consensus,doan2017convergence,assran2020asynchronous,zhang2019fully,wu2017decentralized,tian2020achieving, kungurtsev2023decentralized,ubl2021totally,mishchenko2018delay}, we categorize these works based on their step-sizes and compare them to our results.

\noindent\textbf{\emph{delay-dependent step-size}}:\cite{wu2017decentralized, tian2020achieving,zhang2019fully} assume the existence of an upper bound on the information delay and use fixed parameters relying on and decreasing with the delay bound. Although the works \cite{wu2017decentralized, tian2020achieving,zhang2019fully} can achieve convergence to the exact optimum under partial asynchrony, which is more desirable than the inexact convergence of our algorithms, they suffer from difficult parameter determination and unnecessary slow convergence for two reasons. Firstly, the delay bound is often unknown and hard to obtain in advance. Secondly, the delay bound is typically large, which leads to small step-sizes and further slows down the convergence process. Our numerical experiments in Section \ref{sec:exp} suggest that the asynchronous DGD and DGD-ATC can significantly outperform PG-EXTRA \cite{wu2017decentralized} for the simulated problem. In addition, our algorithms can converge under total asynchrony that is not allowed in \cite{wu2017decentralized, tian2020achieving,zhang2019fully}.

\noindent \textbf{\emph{delay-free and non-diminishing step-size}}: This category includes \cite{nedic2010convergence, ubl2021totally, mishchenko2018delay}. However, \cite{nedic2010convergence, ubl2021totally} can only solve simple problems. The work \cite{nedic2010convergence} focuses on the consensus problem which is equivalent to Problem \eqref{eq:consensusprob} with $f_i(x)\equiv 0$, and \cite{ubl2021totally} can only deal with Problem \eqref{eq:consensusprob} with quadratic objective functions. The work \cite{mishchenko2018delay} can solve Problem \eqref{eq:consensusprob} with non-quadratic objective functions, but requires star networks. In contrast, our results in Theorem \ref{thm:total}--\ref{thm:partial} allow for non-quadratic objective functions and non-star communication networks, which is a substantial improvement.

\noindent\textbf{\emph{diminishing step-size}}: \cite{zhang2019asyspa,sirb2016consensus,doan2017convergence,assran2020asynchronous,kungurtsev2023decentralized} consider diminishing step-sizes that are also delay-free. However, the diminishing step-sizes decrease rapidly and can lead to slow practical convergence. Moreover, \cite{zhang2019asyspa,sirb2016consensus,doan2017convergence,assran2020asynchronous,kungurtsev2023decentralized} all focus on partial asynchrony, while our algorithms can converge under total asynchrony.

\section{Numerical Experiments}\label{sec:exp}

We evaluate the practical performance of the asynchronous DGD and the asynchronous DGD-ATC on decentralized learning using the $\ell_2$- regularized logistic loss:
\begin{equation}\label{eq:simuprob}
    \begin{split}
        \underset{x\in\mathbb{R}^d}{\operatorname{minimize}}~\frac{1}{N}\sum_{i=1}^N \left(\log(1+e^{-b_i(a_i^Tx)})+\frac{\lambda}{2}\|x\|^2\right),
    \end{split}
\end{equation}
where $N$ is the number of samples, $a_i$ is the feature of the $i$th sample, $b_i$ is the corresponding label, and $\lambda=10^{-3}$ is the regularization parameter. The experiments use the training set of Covertype \cite{Dua:2019} and MNIST \cite{lecun1998gradient} summarized below:
\begin{table}[!htb]
		\centering
		\caption{Information about training data sets.}
		\begin{tabular}{c|c|c}
			\hline
			Data set & sample number $N$ & feature dimension $d$\\
			\hline
			Covertype & 581012 & 54\\
			\hline
			MNIST & 60000 & 784\\
			\hline
		\end{tabular}
		\label{tab:dataset}
	\end{table}

We compare our algorithms with the asynchronous PG-EXTRA \cite{wu2017decentralized}. We do not compare with the algorithms in~\cite{zhang2019asyspa,sirb2016consensus,doan2017convergence,kungurtsev2023decentralized,assran2020asynchronous} with diminishing step-sizes because \cite{zhang2019asyspa,sirb2016consensus,doan2017convergence,kungurtsev2023decentralized} require Lipschitz continuous objective functions which does not hold for Problem \eqref{eq:simuprob} and the maximum allowable step-size in \cite{assran2020asynchronous} is excessively small ($\le 10^{-10}$ in our experiment setting). We set $n=8$, evenly partition and allocate all data samples to each node, and implement all the methods on a multi-core computer using the message-passing framework MPI4py \cite{dalcin2008mpi}, where each core serves as a node and the communication graph $\mc{G}$ is displayed in Figure \ref{fig:communicationgraph}. 
In the experiments, each node $i\in\mc{V}$ is activated once its buffer $\mc{B}_i$ is non-empty, and the delays are generated by real interactions between the nodes and not by any theoretical delay model. We set $\alpha=\min_{i\in\mc{V}} w_{ii}/\max_{i\in\mc{V}} L_i$ in the asynchronous DGD and $\alpha=1/\max_{i\in\mc{V}} L_i$ in the asynchronous DGD-ATC, which meet their conditions in Theorems \ref{thm:total}--\ref{thm:partial}. We fine-tune the parameters of the asynchronous PG-EXTRA within their theoretical ranges for guaranteeing convergence. The theoretical ranges involve the maximum delay, which is determined by recording the maximum observed delay during a 20-second run of the method.

\begin{figure}[!htb]
    \centering    \includegraphics[scale=0.45]{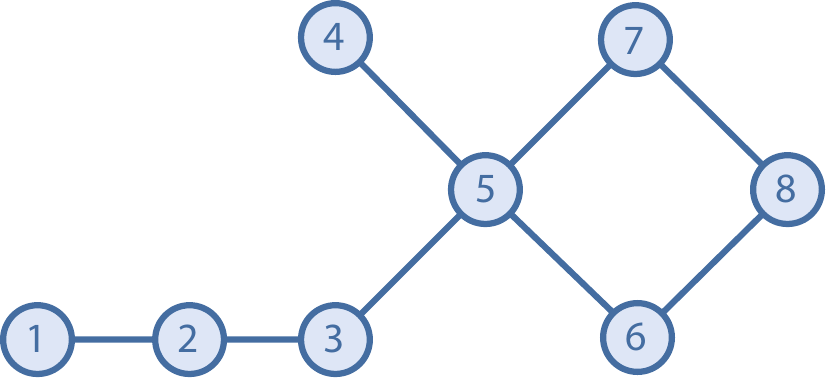}
    \vspace{0.4cm}
    
    \caption{Communication graph in simulation.}
    \label{fig:communicationgraph}
\end{figure}

\begin{figure}[!htb]
	\centering
	\subfigure[Covertype]{
		\includegraphics[scale=0.77]{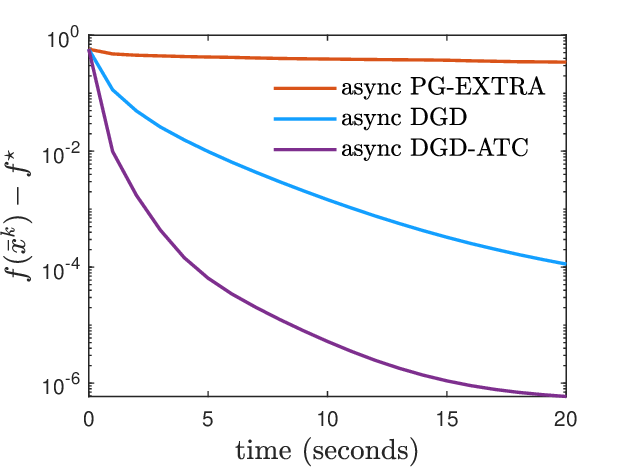}\label{fig:covtype}}\\
	\subfigure[MNIST]{
		\includegraphics[scale=0.77]{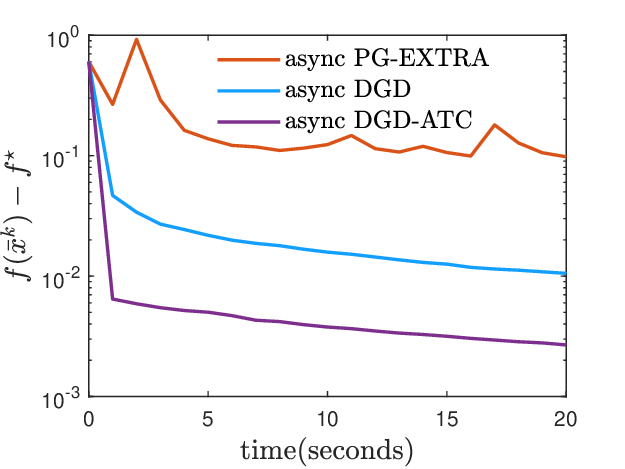}\label{fig:mnist}}
	\caption{Convergence on logistic regression}
	\label{fig:conv}
\end{figure}

We run all methods for $20$ seconds and plot the training error $f(\bar{x}^k)-f^\star$ at the average iterate $\bar{x}^k=\frac{1}{n}\sum_{i=1}^n x_i^k$ in Figure \ref{fig:conv}, where $f^\star$ is the optimal value of \eqref{eq:simuprob}. We can see that for both datasets, the asynchronous DGD-ATC outperforms the asynchronous DGD as indicated by \eqref{eq:fastrate}, and they both converge faster than the asynchronous PG-EXTRA. The slow convergence of the asynchronous PG-EXTRA may be because of its conservative parameters caused by the large delay, while our algorithms can converge under much more relaxed delay-free parameter conditions.

\section{Conclusion}

We have investigated the asynchronous version of two distributed algorithms, DGD and DGD-ATC, for solving consensus optimization problems. We first reviewed existing results on the optimality gap
of DGD and developed a corresponding results for the optimality gap of DGD-ATC.
Then, we developed \emph{delay-free} parameter conditions under which both asynchronous methods converge to the fixed point set of their synchronous counterparts under total and partial asynchrony. Finally, we demonstrated superior practical convergence of the two asynchronous algorithms via numerical experiments. Future work includes developing asynchronous algorithms with delay-free parameter conditions for other distributed optimization problems.

\appendix
\subsection{Proof of Lemma \ref{lemma:DGDoptimalitygap}}\label{append:gapDGD}
The results in [17] implicitly assume that there exists a fixed point to DGD. However, this is not straightforward in general. Thus, we include a proof to show the existence of the fixed point.
\subsubsection{Non-empty fixed point set of \eqref{eq:consensusprob} under Assumption \ref{asm:prob}} Let $z^\star$ be an optimum to \eqref{eq:consensusprob} and $\bz^\star=\mb{1}_n\otimes z^\star$. Define $L_{\alpha}(\bx) = F(\bx)+\frac{\|\bx\|_{I-\mb{w}}^2}{2\alpha}$. It can be verified that every minimum of $L_{\alpha}$ is a fixed point of \eqref{eq:DGD}. Therefore, to show the fixed point set of \eqref{eq:DGD} is non-empty, it suffices to show the minimum of $L_{\alpha}$ exists. Define 
\begin{equation*}
    \mc{S} = \{\bx: L_{\alpha}(\bx)\le L_{\alpha}(\bz^\star)\}.
\end{equation*}
Since $\min_{\bx\in\mbb{R}^{nd}} L_{\alpha}(\bx)$ is equivalent to $\min_{\bx\in\mc{S}} L_{\alpha}(\bx)$, the minimum of $L_{\alpha}$ exists if the optimum of the later problem exists which can be guaranteed by the nonemptiness and compactness of $\mc{S}$. Clearly, $\mc{S}$ is non-empty since $\bz^\star\in \mc{S}$.

Below, we prove that $\mc{S}$ is compact. To this end, fix $\bx\in \mc{S}$ and define $h_i=\inf_{y\in\mbb{R}^d} f_i(y)$ $\forall i\in\mc{V}$ and $h=\sum_{i\in\mc{V}} h_i$. By the Lipschitz continuity of $\nabla f_i$,
    \begin{equation}\label{eq:hismallerthan}
        h_i\le f_i(x_i-\frac{1}{L_i}\nabla f_i(x_i))\le f_i(x_i) - \frac{1}{2L_i}\|\nabla f_i(x_i)\|^2.
    \end{equation}
    Because $\bx\in \mc{S}$,
    \begin{equation}\label{eq:Fsmallerthanfstar}
        F(\bx)\le L_{\alpha}(\bx)\le L_{\alpha}(\bz^\star)=f^\star,
    \end{equation}
    which, together with \eqref{eq:hismallerthan}, yields
    \begin{equation}\label{eq:boundedgra}
        \|\nabla F(\bx)\|^2 \le 2L(F(\bx) - h)\le 2L(f^\star-h).
    \end{equation}
    Because $F(\bx)\ge h$ and $L_{\alpha}(\bx)\le f^\star$ by \eqref{eq:Fsmallerthanfstar}, we have
    \begin{equation}\label{eq:boundediff}
        \|\bx\|_{I-\mb{W}}^2 = 2\alpha(L_{\alpha}(\bx)-F(\bx))\le 2\alpha(f^\star-h).
    \end{equation}
    Let $\bar{\bx} = \mb{1}_n\otimes\frac{1}{n}\sum_{i\in\mc{V}} x_i$. Since $\mc{G}$ is connected, we have $\operatorname{Range}(I-\mb{W})=\{\by:y_1+\ldots+y_n=0\}$ and $\bar{\bx}-\bx\in \operatorname{Range}(I-\mb{W})$. This, together with \eqref{eq:boundediff} and $\mb{W}\preceq I$, yields
    \begin{equation}\label{eq:gapxbarx}
        \|\bx-\bar{\bx}\|^2\le \frac{\|\bx\|_{I-\mb{W}}^2}{\lambda_{\min}(I-\mb{W})}\le \frac{2\alpha(f^\star-h)}{\lambda_{\min}(I-\mb{W})},
    \end{equation}
    where $\lambda_{\min}(\cdot)$ represents the minimal positive eigenvalue. By the $L$-smoothness of $f$,
    \begin{equation}\label{eq:smoothness}
        \begin{split}
            F(\bar{\bx}) - F(\bx) &\le \langle \nabla F(\bx), \bar{\bx}-\bx\rangle+\frac{L}{2}\|\bar{\bx}-\bx\|^2\\
            &\le \frac{\|\nabla F(\bx)\|\cdot \|\bar{\bx}-\bx\|}{2}+\frac{L}{2}\|\bar{\bx}-\bx\|^2.
        \end{split}
    \end{equation}
    Substituting $F(\bx)\le f^\star$, \eqref{eq:boundedgra}, and \eqref{eq:gapxbarx} into \eqref{eq:smoothness}, we have
    \begin{equation*}
        F(\bar{\bx})\le C_0,
    \end{equation*}
    where $C_0=f^\star+\left(\sqrt{\frac{\alpha L}{\lambda_{\min}(I-\mb{W})}}+\frac{\alpha L}{\lambda_{\min}(I-\mb{W})}\right)(f^\star-h)$. In addition, by \cite[Proposition B.9]{bertsekas1995nonlinear} and the bounded optimum set of Problem \eqref{eq:consensusprob}, we have that every level set of $f$ is bounded, which yields the compactness of
    \begin{align}\label{eq:levelset}
        \{y\in\mbb{R}^d: f(y)\le C_0\}.
    \end{align}
    Due to the arbitrariness of $\bx\in \mc{S}$, we have that for any $\bx\in\mc{S}$, $\frac{1}{n}\sum_{i=1}^n x_i$ belongs to the compact set \eqref{eq:levelset} and \eqref{eq:gapxbarx} holds. Therefore, $\mc{S}$ is compact. Concluding all the above, the fixed point set of DGD \eqref{eq:DGD} is non-empty.

\subsubsection{Optimality gap and uniqueness of fixed point}
The consensus error bound \eqref{eq:consensusDGD} can be directly obtained by letting $x_i^k=x_i^\star$ in \cite[Lemma 2]{yuan2016convergence}. The convergence of DGD follows that of gradient descent because DGD is equivalent to gradient descent for minimizing $L_{\alpha}$.

Next, we prove \eqref{eq:fbarDGD}. Because $\bx^\star$ is a fixed point of \eqref{eq:DGD},
\begin{equation}\label{eq:fpdgd}
    (I-\mb{W})\bx^\star=-\alpha\nabla F(\bx^\star).
\end{equation}
Let $\bar{\bx}^\star=\mb{1}_n\otimes \bar{x}^\star$. Because $\mb{1}_n$ is the eigenvector of $W$ corresponding to the unique maximal eigenvalue $1$, we have
\begin{equation}
    \|W-\frac{\mb{1}_n\mb{1}_n^T}{n}\|\le \max (|\lambda_2(W)|, |\lambda_n(W)|)=\beta.
\end{equation}
Then, 
\begin{equation}\label{eq:Wxstar}
	\begin{split}
	\|\mb{W}(\bx^\star-\bar{\bx}^\star)\| &= \|(\mb{W}-\frac{\mb{1}_n\mb{1}_n^T}{n}\otimes I_d)(\bx^\star-\bar{\bx}^\star)\|\\
	&\le \|\mb{W}-\frac{\mb{1}_n\mb{1}_n^T}{n}\otimes I_d\|\cdot\|\bx^\star-\bar{\bx}^\star\|\\
	&\le \beta\|\bx^\star-\bar{\bx}^\star\|.
	\end{split}
	\end{equation}
	By \eqref{eq:Wxstar},
	\begin{equation*}
	\begin{split}
	\|(I-\mb{W})\bx^\star\| &= \|(I-\mb{W})(\bx^\star-\bar{\bx}^\star)\|\\
	&\ge \|\bx^\star-\bar{\bx}^\star\|-\|\mb{W}(\bx^\star-\bar{\bx}^\star)\|\\
	&\ge (1-\beta)\|\bx^\star-\bar{\bx}^\star\|,
	\end{split}
	\end{equation*}
 which, together with \eqref{eq:fpdgd}, yields
 \begin{equation}\label{eq:conserrorboundbygra}
	\|\bx^\star-\bar{\bx}^\star\|\le \frac{\alpha}{1-\beta}\|\nabla F(\bx^\star)\|.
\end{equation}
By letting $\bx=\bx^\star$ in \eqref{eq:boundedgra} and \eqref{eq:smoothness} and using \eqref{eq:conserrorboundbygra},
\begin{equation}\label{eq:fbarandnonbar}
	\begin{split}
	F(\bar{\bx}^\star) - F(\bx^\star) &\le \|\nabla F(\bx^\star)\|\cdot\|\bar{\bx}^\star-\bx^\star\|+\frac{L}{2}\|\bar{\bx}^\star-\bx^\star\|^2\\
	&\le \left(\frac{\alpha}{1-\beta}+\frac{L\alpha^2}{2(1-\beta)^2}\right)C.
	\end{split}
	\end{equation}
 Also, $F(\bx^\star)\le L_{\alpha}(\bx^\star)\le L_{\alpha}(\bz^\star)=f^\star$. Then we have \eqref{eq:fbarDGD}.
 
If all the $f_i$'s are strongly convex, the function $L_{\alpha}$ is strongly convex and, therefore, its minimum is unique. Note that every fixed point of DGD \eqref{eq:DGD} is a minimum of $L_{\alpha}$ and vice versa. Thus, the fixed point of DGD \eqref{eq:DGD} is also unique.

\subsection{Proof of Lemma \ref{lemma:DGDATCoptimalitygap}}\label{append:DGDATCoptimalitygap}
Define $\tilde{L}_{\alpha}(\bx) = F(\bx)+\frac{\|\bx\|_{\mb{W}^{-1}-I}^2}{2\alpha}$. Note that we assume $W$ is invertible, so is $\mb{W}$. Moreover, every minimum of $\tilde{L}_{\alpha}$ is a fixed point of \eqref{eq:DGD-ATC} and vice versa. By almost the same proof with that of Lemma \ref{lemma:DGDoptimalitygap}, the minimum of $\tilde{L}_{\alpha}$ exists, and it is unique if, in addition, each $f_i$ is strongly convex. Therefore, the fixed point set of \eqref{eq:DGD-ATC} is non-empty, and if each $f_i$ is strongly convex, then it is a singleton.

Next, we prove \eqref{eq:consensusDGD}--\eqref{eq:fbarDGD}. Suppose $\bx^\star$ is a fixed point of \eqref{eq:DGD-ATC} and $z^\star$ is an optimum of \eqref{eq:consensusprob}. Let $h$ be a lower bound of $F(\bx)$, which exists due to Assumption \ref{asm:prob}. Similar to \eqref{eq:boundedgra},
\begin{equation}\label{eq:grafstarbound}
    \|\nabla F(\bx^\star)\|^2 \le 2L(f^\star - h)=C.
\end{equation}
Then, by $\mb{x}^\star = \mb{W}(\bx^\star - \alpha\nabla F(\bx^\star))$, we have \begin{equation}\label{eq:proofDGDWinverseminiusI}
    \|(\mb{W}^{-1}-I)\bx^\star\| = \alpha \|\nabla F(\bx^\star)\|\le \alpha\sqrt{C}.
\end{equation}
Let $\bar{\bx}^\star=\mb{1}_n\otimes \bar{x}^\star$. Because $\bar{\bx}^\star-\bx^\star\in \operatorname{Range}(\mb{W}^{-1}-I)=\{\by:y_1+\ldots+y_n=0\}$ and $\lambda_{\min}(\mb{W}^{-1}-I)=\frac{1}{\lambda_2(W)}-1=\frac{1}{\beta}-1$, we have
\begin{equation}\label{eq:consensusstarbound}
    \begin{split}
        &\|x_i^\star-\bar{x}^\star\|\le \|\bx^\star-\bar{\bx}^\star\|\\
        \le&\frac{\beta\|(\mb{W}^{-1}\!-I)\bx^\star\|}{1-\beta}\le \frac{\alpha\sqrt{C}}{1-\beta},
    \end{split}
\end{equation}
where the last step uses \eqref{eq:proofDGDWinverseminiusI}. Therefore, \eqref{eq:consensusDGD} holds.

Let $\bz^\star=\mb{1}_n\otimes z^\star$. By \eqref{eq:grafstarbound} and \eqref{eq:consensusstarbound}, equation \eqref{eq:fbarandnonbar} also holds for DGD-ATC. In addition,
\begin{equation}\label{eq:gapimportantATC}
    F(\bx^\star)\le \tilde{L}_{\alpha}(\bx^\star) \le \tilde{L}_{\alpha}(\bz^\star)=f^\star.
\end{equation}
Substituting \eqref{eq:gapimportantATC} into \eqref{eq:fbarandnonbar} yields \eqref{eq:fbarDGD}. Completes the proof.

\subsection{Proof of Theorem \ref{thm:total}}\label{append:thmtotal}
The proof includes two steps. Step 1 rewrites the two methods as a unified form and introduce a convergence theorem for the unified algorithm form. Step 2 proves that the two asynchronous methods satisfy the conditions in the convergence theorem.

\textbf{Step 1: a unified description for the asynchronous DGD and the asynchronous DGD-ATC.} Both DGD \eqref{eq:DGD} and DGD-ATC \eqref{eq:DGD-ATC} can be described by the general fixed-point update:
\begin{equation}\label{eq:fpu}
    \mathbf{x}^{k+1} = \T(\mathbf{x}^k),
\end{equation}
where $\T:\mathbb{R}^{nd}\rightarrow\mathbb{R}^{nd}$ is a function and
\begin{align}
\textbf{DGD:}~&~\T(\bx)= \mathbf{W}\bx - \alpha \nabla F(\bx),\label{eq:T}\\
\textbf{DGD-ATC:}~&~\T(\bx)= \mathbf{W}(\bx - \alpha \nabla F(\bx)).\label{eq:TATC}
\end{align}
In addition, let $\T_i:\mathbb{R}^{nd}\rightarrow\mathbb{R}^d$ be the $i$th block of $\T$ for any $i\in \mc{V}$ and consider the asynchronous version of \eqref{eq:fpu}:
\begin{equation}\label{eq:asyncop}
    x_i^{k+1} = \begin{cases}
        \T_i(\bz_i^k), & k\in\mc{K}_i,\\
        x_i^k, & \text{otherwise},
    \end{cases}
\end{equation}
where $\bz_i^k = (x_1^{t_{i1}^k}, \ldots, x_n^{t_{in}^k})$ for some non-negative integers $t_{ij}^k$. By letting
\begin{equation}
    t_{ij}^k = 
    \begin{cases}
        s_{ij}^k, & j\in\bar{\mc{N}}_i,\\
        k, & \text{otherwise},
        \end{cases} \forall i\in\mc{V},~k\in\mc{K}_i,
\end{equation}
\eqref{eq:asyncop} with $\T$ in \eqref{eq:T} and \eqref{eq:TATC} describes the asynchronous DGD and the asynchronous DGD-ATC, respectively.

For the asynchronous update \eqref{eq:asyncop}, \cite{Feyzmahdavian21} presents the following convergence results for pseudo-contractive operator $\T$: for some $\rho\in(0,1)$,
\begin{equation}\label{eq:pseudocontractive}
    \!\|\T(\bx)-\bx^\star\|_{\infty}^b\le \rho\|\bx-\bx^\star\|_{\infty}^b,\forall \bx\in\mathbb{R}^{nd}, \bx^\star\in \operatorname{Fix} \T,
\end{equation}
where $\operatorname{Fix} \T$ is the fixed point set of $\T$.
\begin{lemma}[Theorem 3.20, \cite{Feyzmahdavian21}]\label{lemma:total}
Suppose that Assumption \ref{asm:totalasynchrony} holds and $0\in\mc{K}_i$ $\forall i\in\mc{V}$. If \eqref{eq:pseudocontractive} holds for some $\rho\in(0,1)$, then $\{\bx^k\}$ generated by the iteration \eqref{eq:asyncop} converges
asymptotically to the unique fixed point of $\T$.
\end{lemma}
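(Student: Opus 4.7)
The plan is to follow the classical asynchronous convergence theorem argument of Bertsekas--Tsitsiklis, adapted to the block-max norm $\|\cdot\|_{\infty}^b$ in which the pseudo-contraction \eqref{eq:pseudocontractive} is stated. Uniqueness of the fixed point is immediate from \eqref{eq:pseudocontractive}: if $\bx_1^\star,\bx_2^\star\in\operatorname{Fix}\T$, applying the inequality with $\bx=\bx_1^\star$ and reference point $\bx_2^\star$ gives $\|\bx_1^\star-\bx_2^\star\|_{\infty}^b\le \rho\|\bx_1^\star-\bx_2^\star\|_{\infty}^b$, so $\bx_1^\star=\bx_2^\star$ since $\rho<1$.

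For convergence, let $R=\|\bx^0-\bx^\star\|_{\infty}^b$ and introduce the nested block-wise boxes
\begin{equation*}
    X(s)=\{\bx\in\mbb{R}^{nd}:\|\bx-\bx^\star\|_{\infty}^b\le \rho^s R\},\qquad s\in\N_0,
\end{equation*}
together with the per-block balls $X(s)_i=\{y\in\mbb{R}^d:\|y-x_i^\star\|\le \rho^s R\}$, so that $\bx\in X(s)$ iff $x_i\in X(s)_i$ for every $i\in\mc{V}$. The pseudo-contraction \eqref{eq:pseudocontractive} then gives the crucial \emph{box condition}: whenever $\bz\in X(s)$, the $i$th block of $\T(\bz)$ lies in $X(s+1)_i$, and hence in $X(s)_i\supseteq X(s+1)_i$. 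First I would verify, by induction on $k$, that $x_i^k\in X(0)_i$ for all $i,k$: the unchanged-coordinate case is trivial, and an update in $\mc{K}_i$ invokes the box condition with $s=0$ on $\bz_i^k$, whose components lie in $X(0)$ by the inductive hypothesis. This pins the trajectory inside $X(0)$ for all time.

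The main induction then shows that for every $s\ge 0$ there exists $k_s\in\N_0$ such that $\bx^k\in X(s)$ for all $k\ge k_s$. Assuming this holds at stage $s$, I would invoke Assumption \ref{asm:totalasynchrony}(2), i.e.\ $\lim_{k\to\infty} s_{ij}^k=+\infty$, to pick $k'_s\ge k_s$ with $t_{ij}^k\ge k_s$ for all $i,j$ and all $k\ge k'_s$; this ensures that every $\bz_i^k$ used after time $k'_s$ has all of its blocks inside $X(s)$. Applying the box condition gives $\T_i(\bz_i^k)\in X(s+1)_i$ whenever $k\ge k'_s$ and $k\in\mc{K}_i$. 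Finally, Assumption \ref{asm:totalasynchrony}(1) guarantees that each $\mc{K}_i$ is infinite, so every node performs at least one update after $k'_s$; taking $k_{s+1}$ to be any time past the first such update for all $i$ places every $x_i^k$ in $X(s+1)_i$ for $k\ge k_{s+1}$, with the bound preserved at subsequent iterations (either the coordinate is carried over, or it is updated from data still residing in $X(s)$, which maps into $X(s+1)_i$). Since $\rho^s R\to 0$, this yields $\bx^k\to\bx^\star$ in the block-max norm, which is equivalent to the usual Euclidean topology on $\mbb{R}^{nd}$.

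The delicate step, and what I expect to be the main bookkeeping obstacle, is the inductive propagation from stage $s$ to stage $s+1$: one must simultaneously (i) wait long enough that all \emph{indices} $t_{ij}^k$ have advanced past $k_s$, using part~2 of Assumption~\ref{asm:totalasynchrony}, (ii) wait further so that each node has actually performed an update in $\mc{K}_i$ after that threshold, using part~1, and (iii) verify that subsequent updates do not re-introduce stale coordinates from outside $X(s)$. The hypothesis $0\in\mc{K}_i$ for all $i$ is used only to make the $s=0$ trajectory well defined from the outset. The argument does not require the iteration to be synchronous at any point and uses the pseudo-contraction purely block-coordinate-wise, which is precisely why the block-max norm is the right choice.
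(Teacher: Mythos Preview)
Your argument is correct and is precisely the classical Bertsekas--Tsitsiklis nested-box proof adapted to the block-max norm: uniqueness via the contraction inequality, then the induction over the family $\{X(s)\}$ using the two parts of Assumption~\ref{asm:totalasynchrony} to successively flush stale information and force every block into the next box. The bookkeeping in steps (i)--(iii) is handled correctly.

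However, the paper does not give its own proof of this statement: Lemma~\ref{lemma:total} is simply quoted as Theorem~3.20 of \cite{Feyzmahdavian21}, and the paper merely remarks afterward that the auxiliary hypothesis $0\in\mc{K}_i$ is inessential. So there is nothing to compare against in the paper itself. Your write-up supplies what the paper delegates to the reference, and it is the standard route (indeed, essentially the argument one would expect to find in \cite{Feyzmahdavian21} or in Bertsekas--Tsitsiklis). One minor point worth noting: the inequality~\eqref{eq:pseudocontractive} as stated is vacuous if $\operatorname{Fix}\T=\emptyset$, so existence of a fixed point is being tacitly assumed (in the paper this is handled separately by Lemmas~\ref{lemma:DGDoptimalitygap}--\ref{lemma:DGDATCoptimalitygap}); your uniqueness argument is fine once existence is granted.
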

Although Theorem 3.20 in \cite{Feyzmahdavian21} assumes
\begin{align}\label{eq:initialfey}
    0\in \mc{K}_i, \forall i\in\mc{V}
\end{align}
for simplicity of presentation, the convergence still holds without \eqref{eq:initialfey}. With Lemma \ref{lemma:total}, to show Theorem \ref{thm:total}, it suffices to show the pseudo-contractivity \eqref{eq:pseudocontractive} for $\T$ in \eqref{eq:T} and \eqref{eq:TATC}.

\textbf{Step 2: Proof of pseudo-contractivity \eqref{eq:pseudocontractive}}. Let $\rho_{\operatorname{c}}$ and $\rho_{\operatorname{a}}$ be the value in \eqref{eq:rhodgd} and \eqref{eq:rhodgdatc}, respectively. Below, we show \eqref{eq:pseudocontractive} for the two operators in \eqref{eq:T} and \eqref{eq:TATC}.

\textbf{1) $\T$ in \eqref{eq:T}}: For any $i\in\mc{V}$, since $x_i^\star=\T_i(\bx^\star)$,
    \begin{equation}\label{eq:nonexpansiveATC}
    \begin{split}
    &\|\T_i(\bx)-x_i^\star\|^2 = \|\T_i(\bx)-\T_i(\bx^\star)\|^2\\
    =&\|\sum_{j\in\mc{N}_i} w_{ij}(x_j-x_j^\star) +\\
    &\quad w_{ii}(x_i-x_i^\star-\frac{\alpha}{w_{ii}}(\nabla f_i(x_i)-\nabla f_i(x_i^\star)))\|^2\\
    \le&\sum_{j\in\mc{N}_i} w_{ij}\|x_j-x_j^\star\|^2+\\
    & w_{ii}\|x_i-x_i^\star-\frac{\alpha}{w_{ii}}(\nabla f_i(x_i)-\nabla f_i(x_i^\star))\|^2,
    \end{split}
    \end{equation}
    where the last step uses Jensen's inequality on the norm square. Since each $f_i$ is $\mu_i$-strongly convex and $\nabla f_i$ is Lipschitz continuous, by \cite[Equation (2.1.8)]{nesterov2003introductory},
\begin{align}
    &\langle \nabla f_i(x_i)-\nabla f_i(x_i^\star), x_i-x_i^\star\rangle\ge\mu_i\|x_i-x_i^\star\|^2,\label{eq:proofstrongconvexity}\\
    &\langle \nabla f_i(x_i)-\nabla f_i(x_i^\star), x_i-x_i^\star\rangle\ge\frac{\|\nabla f_i(x_i)-\nabla f_i(x_i^\star)\|^2}{L_i}.\label{eq:proofsmooth}
\end{align}
Then,
\begin{align}
&\|x_i-x_i^\star-\frac{\alpha}{w_{ii}}(\nabla f_i(x_i)-\nabla f_i(x_i^\star))\|^2\nonumber\allowdisplaybreaks\\
        =&\|x_i-x_i^\star\|^2-2\frac{\alpha}{w_{ii}}\langle \nabla f_i(x_i)-\nabla f_i(x_i^\star), x_i-x_i^\star\rangle\nonumber\allowdisplaybreaks\\
        &+(\frac{\alpha}{w_{ii}})^2\|\nabla f_i(x_i)-\nabla f_i(x_i^\star)\|^2\allowdisplaybreaks\nonumber\\
        \overset{\eqref{eq:proofsmooth}}{\le}& \|x_i-x_i^\star\|^2\!\!-\!\frac{\alpha}{w_{ii}}(2\!-\!\frac{L_i\alpha}{w_{ii}})\langle \nabla f_i(x_i)\!-\!\nabla f_i(x_i^\star), x_i-x_i^\star\rangle\nonumber\allowdisplaybreaks\\
        \overset{\eqref{eq:proofstrongconvexity}}{\le}&(1-\frac{\alpha}{w_{ii}}(2-\frac{L_i\alpha}{w_{ii}})\mu_i)\|x_i-x_i^\star\|^2\allowdisplaybreaks.\label{eq:shrinkATC}
\end{align}
Substituting \eqref{eq:shrinkATC} into \eqref{eq:nonexpansiveATC} yields
\begin{equation*}
\begin{split}
    \|\T_i(\bx)-x_i^\star\|^2\le \rho_{\operatorname{c}}^2(\|\bx-\bx^\star\|_{\infty}^b)^2,
\end{split}
\end{equation*}
which leads to \eqref{eq:pseudocontractive} with $\rho=\rho_{\operatorname{c}}$.

\textbf{2) $\T$ in \eqref{eq:TATC}}: For any $i\in\mc{V}$, since $x_i^\star=\T_i(\bx^\star)$,
    \begin{equation}\label{eq:nonexpansiveCAA}
        \begin{split}
            &\|\T_i(\bx)-x_i^\star\|^2 = \|\T_i(\bx)-\T_i(\bx^\star)\|^2\\
            =&\|\sum_{j\in\bar{\mc{N}}_i} w_{ij}(x_j-x_j^\star-\alpha(\nabla f_j(x_j)-\nabla f_j(x_j^\star)))\|^2\\
            \le&\sum_{j\in\bar{\mc{N}}_i} w_{ij}\|x_j-x_j^\star-\alpha(\nabla f_j(x_j)-\nabla f_j(x_j^\star))\|^2,
        \end{split}
    \end{equation}
where the last step uses Jensen's inequality on the norm square. Similar to \eqref{eq:shrinkATC} with $\alpha=1$,
\begin{equation*}
    \begin{split}
        &\|x_j-x_j^\star-\alpha(\nabla f_j(x_j)-\nabla f_j(x_j^\star))\|^2\\
        \le&(1-\alpha(2-L_j\alpha))\|x_j-x_j^\star\|^2\\
        \le& \rho_{\operatorname{a}}^2(\|\bx-\bx^\star\|_{\infty}^b)^2.
    \end{split}
\end{equation*}
Substituting the above equation into \eqref{eq:nonexpansiveCAA} yields
\begin{equation*}
        \|\T_i(\bx)-x_i^\star\|^2\le \rho_{\operatorname{a}}^2(\|\bx-\bx^\star\|_{\infty}^b)^2,
\end{equation*}
which results in \eqref{eq:pseudocontractive} with $\rho=\rho_{\operatorname{a}}$ and completes the proof.


\subsection{Proof of Theorem \ref{thm:partial}}\label{append:thmpartial}

The proof uses Theorem 3.21 in \cite{Feyzmahdavian21}.
\begin{lemma}[Theorem 3.21, \cite{Feyzmahdavian21}]\label{lemma:partial}
Suppose that Assumption \ref{asm:partialasynchrony} and \eqref{eq:initialfey} hold. If \eqref{eq:pseudocontractive} holds for some $\rho\in(0,1)$, then $\{\bx^k\}$ generated by the asynchronous iteration \eqref{eq:asyncop} satisfy
\begin{equation}\label{eq:linearfey}
    \|\bx^k-\bx^\star\|_{\infty}^b\le \rho^{\frac{k}{B+D+1}}\|\bx^0-\bx^\star\|_{\infty}^b.
\end{equation}
\end{lemma}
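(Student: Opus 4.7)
The plan is to prove the geometric decay by an epoch induction in which the block-maximum norm shrinks by a factor of $\rho$ every $B+D+1$ iterations. Set $E_0 := \|\bx^0 - \bx^\star\|_\infty^b$, $E_m := \rho^m E_0$, and $T_m := m(B+D+1)$. The main claim to establish by induction on $m$ is that $\|\bx^k - \bx^\star\|_\infty^b \le E_m$ for every $k \ge T_m$; the stated bound \eqref{eq:linearfey} then follows on taking $m = \lfloor k/(B+D+1) \rfloor$.

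For the base case $m=0$, I would show by an inner induction on $k$ that $\|\bx^k - \bx^\star\|_\infty^b \le E_0$ for every $k \ge 0$. Assuming the bound up to iteration $k$, for each $i$ either $k \notin \mc{K}_i$ (and $x_i^{k+1} = x_i^k$ trivially preserves the bound), or $x_i^{k+1} = \T_i(\bz_i^k)$, in which case the components of $\bz_i^k$ are iterates $x_j^{t_{ij}^k}$ at indices in $[0,k]$, so $\|\bz_i^k - \bx^\star\|_\infty^b \le E_0$ and pseudo-contractivity \eqref{eq:pseudocontractive} yields $\|x_i^{k+1} - x_i^\star\| \le \rho E_0 \le E_0$.

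The inductive step is the crux. Assuming $\|\bx^k - \bx^\star\|_\infty^b \le E_m$ for all $k \ge T_m$, the decisive observation, which couples both parts of Assumption \ref{asm:partialasynchrony}, is that any update performed at a time $k \in \mc{K}_i$ with $k \ge T_m + D$ reads only iterates whose indices satisfy $t_{ij}^k \ge k - D \ge T_m$; by the induction hypothesis, each such component is within $E_m$ of $\bx^\star$ in the block-max norm, so pseudo-contractivity gives $\|x_i^{k+1} - x_i^\star\| \le \rho E_m = E_{m+1}$. By Assumption \ref{asm:partialasynchrony}(1), every node $i$ must update at least once during the $B+1$ consecutive iterations $\{T_m + D, \ldots, T_m + D + B\}$; from that first ``refreshed'' update onward the bound $\|x_i^{k} - x_i^\star\| \le E_{m+1}$ holds for all subsequent $k$, since any later update of $i$ again satisfies the hypothesis and inactive iterations leave $x_i$ stationary. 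Taking the maximum over $i$ gives $\|\bx^k - \bx^\star\|_\infty^b \le E_{m+1}$ for every $k \ge T_m + B + D + 1 = T_{m+1}$, closing the induction.

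The main obstacle is the intra-epoch bookkeeping: one must cleanly separate the $D$ steps needed so that delayed reads are purged of pre-$T_m$ information from the $B$ additional steps needed so that every node actually fires at least once, and then verify that once a node has made its first refreshed update, no later event can undo the contraction. Both points reduce to mechanical applications of Assumption \ref{asm:partialasynchrony}, and their combination is precisely what justifies the epoch length $B+D+1$ used in the exponent.
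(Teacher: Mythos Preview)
The paper does not supply its own proof of this lemma; it is quoted directly as \cite[Theorem~3.21]{Feyzmahdavian21}. Your epoch-induction strategy is the classical approach and is essentially correct in outline.

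There is, however, one genuine gap. Your induction delivers $\|\bx^k-\bx^\star\|_\infty^b\le \rho^{m}E_0$ for all $k\ge T_m=m(B+D+1)$, and you then set $m=\lfloor k/(B+D+1)\rfloor$; this yields only the floor bound $\rho^{\lfloor k/(B+D+1)\rfloor}E_0$, which is \emph{weaker} than the exponent $k/(B+D+1)$ asserted in \eqref{eq:linearfey} (since $\rho<1$ and $\lfloor k/(B+D+1)\rfloor\le k/(B+D+1)$). The hypothesis you never invoke, \eqref{eq:initialfey} (i.e.\ $0\in\mc{K}_i$ for every $i$), is precisely what closes this gap: at $k=0$ every node updates and every delay is forced to zero, so $\|\bx^k-\bx^\star\|_\infty^b\le \rho E_0$ already holds for all $k\ge 1$. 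Restarting your epoch induction from this sharper base ($T_1=1$ rather than $T_1=B+D+1$) gives $\|\bx^k-\bx^\star\|_\infty^b\le \rho^{\,1+\lfloor(k-1)/(B+D+1)\rfloor}E_0$ for $k\ge 1$, and an elementary division-with-remainder check shows $1+\lfloor(k-1)/(B+D+1)\rfloor\ge k/(B+D+1)$, which recovers \eqref{eq:linearfey}. The paper itself flags exactly this distinction in Appendix~\ref{append:thmpartial}: when \eqref{eq:initialfey} is dropped, the conclusion degrades to the floor version, which is what your argument as written actually proves.
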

Note that in \textbf{Step 2} of Appendix \ref{append:thmtotal}, we have shown the pseudo-contractivity for both $\T$ in \eqref{eq:T} and \eqref{eq:TATC}. In addition, although we do not assume \eqref{eq:initialfey}, the proof of \cite[Theorem 3.21]{Feyzmahdavian21} still holds, with the convergence rate \eqref{eq:linearfey} becomes
\begin{equation*}
    \|\bx^k-\bx^\star\|_{\infty}^b\le \rho^{\lfloor\frac{k}{B+D+1}\rfloor}\|\bx^0-\bx^\star\|_{\infty}^b.
\end{equation*}
Completes the proof.
\bibliographystyle{ieeetran}
\bibliography{reference}

\end{document}